\documentclass[a4paper]{article}

\usepackage{amssymb}
\usepackage{amsmath}
\usepackage{amsthm}
\usepackage{amsfonts}
\usepackage{graphicx}
\usepackage{subfigure}
\usepackage{enumerate}
\usepackage{cmap}

\theoremstyle{plain}
\newtheorem{nn}{}[section]
\newtheorem{theorem}[nn]{Theorem}
\newtheorem{lemma}[nn]{Lemma}

\newtheorem{proposition}[nn]{Proposition}
\newtheorem{remark}[nn]{Remark}

\theoremstyle{definition}
\newtheorem{definition}[nn]{Definition}

\newcommand{\R}{\mathbb{R}}

\makeatletter

\renewcommand{\@listI}{%
\leftmargin=40pt
\rightmargin=0pt
\labelsep=5pt
\labelwidth=20pt
\itemindent=0pt
\listparindent=0pt
\topsep=2pt plus 1pt minus 1pt
\partopsep=1pt plus 1pt
\parsep=1pt plus 1pt
\itemsep=\parsep}

\renewcommand{\@listii}{%
\leftmargin=25pt
\rightmargin=0pt
\labelsep=5pt
\labelwidth=20pt
\itemindent=0pt
\listparindent=0pt
\topsep=0pt plus 1pt
\partopsep=0pt plus 1pt
\parsep=0pt
\itemsep=\parsep}

\makeatother

\title{Some possible numbers of edge coverings of a bipartite graph or shortest paths with fixed ends in a space of compact sets in $R^n$}
\date{}
\begin{document}
\maketitle
\section{Introduction}

Hausdorff distance was introduced in the beginning of XX century in order to measure distance between compact sets. This function behaves as a distance function on a set of compacts of an arbitrary metric space $X$, denoted as $\mathcal{H}(X)$.

In \cite{Core} numbers of shortest paths between points in $\mathcal{H}(\R^n)$ were studied, a link with a graph theory was found and was shown that for any number from 1 to 36 (except 19) there is a pair of compact sets in $\R^n$ such that there is such a number of shortest paths between them. It was shown that there is no such pair for 19.

This work expands results given in \cite{Core}. By using machine computation it will be shown that there is no such pair of compact sets in $\R^n$ that there are 19, 37, 41, 59 or 67 shortest paths in $\mathcal{H}(\R^n)$ between this two compacts, and there can be found such pairs for all natural numbers from 1 to 1000 with the exception of 19, 37, 41, 59, 67, 82, 97, 149, 197, 223, 257, 291, 379 (for the last 8 numbers we can not give a definite answer).

The author would like to thank Prof. A.O. Ivanov and Prof. A.A. Tuzhilin for advice and discussion regarding the problem. The work is supported by Russian Foundation for Basic Research, Project 13–01–00664, the Russian Government project 11.G34.31.0053, and Russian President Grant for Leading Scientific Schools Support, Project NSh–1410.2012.1.

\section{Definitions and previous results}

It is assumed implicitly everywhere in this work that no graph can have multiple edges.

\begin{definition}
Let $(X, \rho)$ be a metric space, $a \in X$ is a point in this space and $B \subset X$ is an arbitrary set. Then \emph{distance between point and this set} is the following number: $\rho(a, B) := \inf\limits_{b \in B}\rho(a, b)$.
\end{definition}

\begin{definition}
\emph{Hausdorff distance} between two compact sets $A, B$ in metric space $X$ is a function $\rho_H(A, B) = \max(\sup\limits_{a \in A}\rho(a, B), \sup\limits_{b \in B}\rho(b, A))$.
\end{definition}

\begin{definition}
A \emph{parametrized curve} is a continuous function $f \colon [0,1] \rightarrow X$. Points $f(0)$ and $f(1)$ are called \emph{endpoints}.
\end{definition}

\begin{definition}
Two parametrized curves $f$ and $g$ are equivalent if there is monotonic function $h \colon [0,1] \rightarrow [0,1]$ such that $h(0) = 0, h(1) = 1$ and $f = g \circ h$.
\end{definition}

\begin{definition}
A class of equivalence of parametrized curves is called a curve.
\end{definition}

\begin{definition}
Let $\gamma$ be a curve and $f \colon [0,1] \rightarrow X$ is one of parametrized curves in $\gamma$. Then \emph{length} of a curve is a $\ell(\gamma) = \sup\limits_n \sup\limits_{\{0 = t_0 < t_1 < \ldots < t_n = 1\}}\sum\limits_{i = 0}^{n-1}\rho(f(t_i), f(t_{i+1}))$.
\end{definition}

\begin{definition}
A curve is called \emph{shortest} if any other curve between the same endpoints has more or equal length.
\end{definition}

\begin{definition}
\emph{An edge covering} of a graph $G = (V, E)$ is such a subset $E'$ of its edges that any vertex of graph is incident to at least one edge from $E'$.
\end{definition}

In \cite{Prep} it was shown that a problem of finding a number of shortest paths between points in $\mathcal{H}(\R^n)$ can be reduced to the problem of finding a number of edge coverings of a bipartite graph, namely:

\begin{theorem}
For any pair of points $(A, B)$ in $\mathcal{H}(\R^n)$ such a bipartite graph $G$ can be constructed, that its number of edge coverings will be equal to number of shortest paths between points $A$ and $B$. For any bipartite graph $G$ there exists such a pair of compacts in $\R^n$ that number of edge coverings of $G$ equals number of shortest paths between these two compacts in $\mathcal{H}(\R^n)$.
\end{theorem}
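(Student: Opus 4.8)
The plan is to prove the two assertions separately, both resting on a structural description of the shortest paths in $\mathcal{H}(\R^n)$. Set $r=\rho_H(A,B)$. The first step is to record that $\mathcal{H}(\R^n)$ is a geodesic space and to pin down what a geodesic from $A$ to $B$ looks like: along any shortest path the intermediate compact $\gamma(t)$ satisfies $\rho_H(A,\gamma(t))=tr$ and $\rho_H(\gamma(t),B)=(1-t)r$, so that each point of $\gamma(t)$ lies on a segment realizing a transition from a point of $A$ to a point of $B$ of length at most $r$. I would then isolate the extremal data forcing the distance $r$, namely the sets $A^{\ast}=\{a\in A:\rho(a,B)=r\}$ and $B^{\ast}=\{b\in B:\rho(b,A)=r\}$ together with the realizing pairs $(a,b)$ with $\rho(a,b)=r$. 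These are exactly the constraints a geodesic must respect: at time $t$ every extremal point of $A$ must remain within $tr$ of $\gamma(t)$ and within $(1-t)r$ of $B$, and this pins down how such a point may be matched.

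For the first direction (compacts $\to$ graph) I would build a bipartite graph $G$ whose two vertex classes encode the extremal structure of $A$ and of $B$ and whose edges are the realizing (distance-$r$) pairs, and then establish a bijection between shortest paths and edge coverings. A geodesic is determined by choosing, for each extremal point, which realizing partner(s) it moves with, subject to the requirement that no extremal point be left unmatched; otherwise the Hausdorff distance on one side would drop too quickly and the curve would fail to be shortest. A subset of edges meeting every vertex is precisely an edge covering, and distinct coverings yield genuinely distinct curves, while conversely every shortest path induces such a covering by reading off its matching data at a regular time. Verifying this bijection --- that the matching data is constant along the geodesic, that it is forced to be a covering, and that no two coverings collapse to the same curve --- is the analytic heart of the argument.

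For the second direction (graph $\to$ compacts), given a bipartite graph $G=(X\sqcup Y,E)$ I would realize its vertices as finite point sets in a sufficiently high-dimensional $\R^n$: place the $X$-vertices as a set $A$ and the $Y$-vertices as a set $B$ so that $\rho(a,b)=r$ exactly when $\{a,b\}\in E$ and $\rho(a,b)>r$ otherwise, while keeping every chosen point extremal (so $\rho(a,B)=r$ for $a\in A$ and $\rho(b,A)=r$ for $b\in B$). A dimension count shows such a configuration exists: prescribing a finite pattern of "equal to $r$'' versus "strictly greater than $r$'' distances is a system of equalities and open inequalities that becomes feasible once $n$ is large, and one can perturb to destroy spurious realizing pairs and spurious extremal points. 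Then the graph reconstructed from $(A,B)$ by the first direction is exactly $G$, so its number of shortest paths equals the number of edge coverings of $G$.

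The main obstacle, in both directions, is controlling the geodesics that are not the canonical neighborhood geodesic $t\mapsto\{x:\rho(x,A)\le tr\}\cap\{x:\rho(x,B)\le(1-t)r\}$: I must show that the only freedom in a shortest path is the combinatorial choice of covering, and that the "filler'' parts of the compacts (non-extremal points, whose nearest partners are slack) contribute no additional geodesic branches and create neither extra edges nor extra vertices. Getting the vertex-and-edge definition of $G$ exactly right so that the count is neither inflated nor deflated --- in particular handling an extremal point of $A$ whose nearest point of $B$ happens to be non-extremal --- is where the care is needed; once the bijection is firmly established, both halves of the statement follow.
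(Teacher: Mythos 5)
You should first be aware that the paper itself contains no proof of this theorem: it is quoted verbatim from the reference \cite{Prep} (``in preparation''), so there is no in-paper argument to compare yours against, and your proposal has to stand on its own. As it stands it is a plan rather than a proof: the two decisive steps are explicitly deferred by you yourself --- you call the verification of the covering/geodesic bijection ``the analytic heart of the argument'' and the control of non-canonical geodesics ``the main obstacle'' --- and neither is carried out. That would already make the proposal incomplete, but there is also a concrete mathematical failure in the first direction as you have set it up.

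The claim you hope to verify, that non-extremal points ``contribute no additional geodesic branches,'' is false, so the graph you build from the extremal sets $A^{\ast}$, $B^{\ast}$ and the realizing pairs gives the wrong count for a general pair of compacts. Take $A=\{a\}$ and $B=\{b_1,b_2\}$ in $\R^2$ with $\rho(a,b_1)=r$ and $\rho(a,b_2)=r'<r$, so $\rho_H(A,B)=r$. Here $b_2\notin B^{\ast}$, your graph is a single edge, and it has exactly one edge covering; but $\gamma(t)=\{(1-t)a+tb_1\}\cup\{c(t)\}$ is a shortest path for \emph{every} curve $c$ from $a$ to $b_2$ satisfying $\rho(c(t),a)\le tr$ and $\rho(c(t),b_2)\le(1-t)r$, and since $r'<r$ these constraint balls overlap in a set with nonempty interior for $0<t<1$, yielding a continuum of geodesics with distinct images. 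So slack points create infinitely many branches, and a finite graph of the kind you describe cannot match the count; the correspondence is only clean for tight configurations (finite $A$, $B$ with every cross distance $\ge r$ and every point belonging to a realizing pair), and any honest proof of the first assertion must either restrict to, or reduce to, that case with an explicit convention for infinite counts. Your second direction is sound in substance precisely because the configuration you construct \emph{is} tight --- the pattern ``$=r$ on edges, $>r$ on non-edges'' forces every in-between set at location $t$ onto the segment points, giving the bijection with coverings --- though you should note it requires $G$ to have no isolated vertices: such a graph has zero edge coverings, while any pair of distinct compacts in the geodesic space $\mathcal{H}(\R^n)$ is joined by at least one shortest path, so that case is unrealizable and must be excluded.
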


Further we will discuss this modified question: how many edge coverings a bipartite graph can have?

\section{Graph decomposition}

In this chapter and further all graphs will be assumed to be connected if not explicitly stated otherwise (for a graph with several connected subgraphs its number of edge coverings equals to the product of numbers of coverings of its connected componenet). Furthermore, we will assume that if we remove vertex from a graph, we also remove all it's incident edges.

We will denote number of edge coverings of graph $G$ as $\alpha(G)$.

\begin{definition}
\emph{A decomposition} of a connected graph $G = (V, E)$ is a finite set of non-empty connected subgraphs $G_i = (V_i, E_i), i = 1,\ldots,n, n 
ge 2$ such that:

1. Each subgraph $G_i$ contains at least one edge.

2. Any two different subgraphs $G_i$, $G_j$ have no common edges and no more than one common vertex.

3. $\bigcup\limits_{i=1}^n V_i = V$, $\bigcup\limits_{i=1}^n E_i = E$.

4. If we construct a graph $H$ with its vertices being graphs $G_i$ and there is an edge between $G_i$ and $G_j$ if and only if $V_i \cap V_j \ne \emptyset$, then this graph is a connected tree. Graph $H$ is called \emph{a graph of decomposition}.
\end{definition}

\begin{definition}
Graph $G$ with at least one edge such that it does not have a decomposition is called \emph{atomic}.
\end{definition}

\begin{remark}
Graphs in a decomposition don't have to be atomic. Moreover, there may be no decomposition on only atomic graphs. For example, a \"star\" with tree rays can be split only on a star with two rays and an edge.
\end{remark}

\begin{remark}
Any non-atomic graph can be decomposed using exactly two subgraphs.
\end{remark}

\begin{lemma}
\label{ZeroNewPoints}
Let $G = (V, E)$ be a connected graph, $e = (uv) \notin E$ --- an edge between vertices $u, v \in V$ and $G' = (V, E \cup \{e\})$, then $\alpha(G') \ge 2 \alpha(G)$.
\end{lemma}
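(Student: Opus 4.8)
The plan is to partition the edge coverings of $G'$ according to whether they contain the new edge $e$, and to show that each of the two resulting classes already has at least $\alpha(G)$ elements.

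First I would handle the edge coverings of $G'$ that do not use $e$. Since $G$ and $G'$ share the same vertex set $V$, a subset $E' \subseteq E$ makes every vertex incident to one of its edges in $G$ exactly when it does so in $G'$; hence the edge coverings of $G'$ that are contained in $E$ are precisely the edge coverings of $G$. This class therefore has exactly $\alpha(G)$ members.

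Next I would produce enough edge coverings of $G'$ that do use $e$. The key observation is that if $E'$ is any edge covering of $G$, then $E' \cup \{e\}$ is an edge covering of $G'$, because every vertex already incident to an edge of $E'$ remains incident to it after adding $e$. Since $e \notin E$, the assignment $E' \mapsto E' \cup \{e\}$ is injective (one recovers $E'$ as $(E' \cup \{e\}) \setminus \{e\}$), so it yields at least $\alpha(G)$ distinct coverings of $G'$, all containing $e$.

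Finally, the two classes are disjoint by construction — one consists of coverings avoiding $e$, the other of coverings containing $e$ — so $\alpha(G') \ge \alpha(G) + \alpha(G) = 2\alpha(G)$. The only point that deserves care, and the reason the statement is an inequality rather than an equality, is that not every covering of $G'$ containing $e$ need arise as $E' \cup \{e\}$ for an edge covering $E'$ of $G$: deleting $e$ from such a covering may leave $u$ or $v$ uncovered, so there can be a surplus of coverings using $e$. Since only a lower bound is claimed, this surplus is harmless, and the injection above already suffices; I do not expect any genuine obstacle here.
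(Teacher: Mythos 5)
Your proof is correct and follows essentially the same approach as the paper: the paper's one-line argument observes that each edge covering $E'$ of $G$ gives rise to two distinct edge coverings $E'$ and $E' \cup \{e\}$ of $G'$, which is exactly your construction. Your version merely spells out the injectivity and the disjointness of the two classes (coverings with and without $e$) that the paper leaves implicit, and correctly notes why the bound is only an inequality.
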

\begin{proof}
For any edge covering $E'$ of graph $G$ there is at least two edge coverings of $G'$: $E'$ and $E' \cup \{e\}$.
\end{proof}

Next two lemmas can be easily proven in the same way.

\begin{lemma}
\label{OneNewPoint}
Let $G = (V, E)$ be a connected graph, $v \notin V$ --- vertex not in the graph, $a, b \in V$ --- vertices that belong to graph $G$ and $G' = (V \cup \{v\}, E \cup \{(av), (vb)\})$. Then $\alpha(G') \ge 3 \alpha(G)$.
\end{lemma}

\begin{lemma}
\label{ManyNewPoints}
Let $G = (V, E)$ be a connected graph, $v_1, \ldots, v_n \notin V$  --- vertices not in the graph, $n \ge 2$, $a, b \in V$ --- vertices that belong to graph $G$ and $G' = (V \cup \{v_1, \ldots, v_n\}, E \cup \{(av_1), (v_1, v_2), \ldots, (v_{n-1}, v_n)(v_nb)\})$. Then $\alpha(G') \ge 5 \alpha(G)$.
\end{lemma}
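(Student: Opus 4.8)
The plan is to mimic the proofs of Lemmas \ref{ZeroNewPoints} and \ref{OneNewPoint}: I will show that every edge covering $E'$ of $G$ extends to at least five distinct edge coverings of $G'$, and that the extensions coming from different pairs never coincide. Write the new edges as $e_0 = (av_1)$, $e_i = (v_i v_{i+1})$ for $1 \le i \le n-1$, and $e_n = (v_n b)$, so that the added structure is a path with $n+1$ edges whose interior vertices are exactly $v_1, \ldots, v_n$. Since $a, b \in V$ are already incident to some edge of $E'$, a set $E' \cup S$ with $S \subseteq \{e_0, \ldots, e_n\}$ is an edge covering of $G'$ if and only if $S$ covers every interior vertex $v_j$; that is, for each $j$ with $1 \le j \le n$ at least one of $e_{j-1}, e_j$ lies in $S$.

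First I would record the counting reformulation. Encoding $S$ by indicator bits $(x_0, \ldots, x_n) \in \{0,1\}^{n+1}$, the covering condition for $v_j$ reads $x_{j-1} \vee x_j = 1$, so $S$ is admissible precisely when the word $x_0 \cdots x_n$ contains no two consecutive zeros. The number of such words of length $n+1$ is the Fibonacci number $F_{n+3}$ (with $F_1 = F_2 = 1$), which is nondecreasing in $n$ and already equals $5$ at $n = 2$; hence there are at least five admissible sets $S$ whenever $n \ge 2$.

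To keep the argument elementary and avoid invoking the Fibonacci identity, I would instead exhibit five admissible sets directly: the full set $\{e_0, \ldots, e_n\}$; the three sets obtained from it by deleting $e_0$, by deleting $e_n$, and by deleting both $e_0$ and $e_n$; and the set obtained by deleting $e_1$. For $n \ge 2$ each interior vertex stays covered in every one of these (for instance, after deleting $e_0$ and $e_n$ the vertices $v_1$ and $v_n$ are still covered by $e_1$ and $e_{n-1}$, which exist and, when $n = 2$, coincide), and the five sets are pairwise distinct because they differ in their intersection with $\{e_0, e_1, e_n\}$. The step needing the most care is exactly this verification in the boundary case $n = 2$, where $e_1 = e_{n-1}$ and several of the named edges collapse onto one another.

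Finally, since $E \cap \{e_0, \ldots, e_n\} = \emptyset$, a covering of the form $E' \cup S$ recovers $E'$ and $S$ uniquely by intersecting with $E$ and with the new edges respectively. Hence the map $(E', S) \mapsto E' \cup S$ from (coverings of $G$) $\times$ (admissible sets $S$) into (coverings of $G'$) is injective, and as there are at least five admissible $S$, we conclude $\alpha(G') \ge 5\,\alpha(G)$.
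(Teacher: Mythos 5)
Your proof is correct and follows essentially the same route as the paper, which dismisses this lemma with the remark that it ``can be easily proven in the same way'' as Lemma \ref{ZeroNewPoints}, i.e., by extending each edge covering $E'$ of $G$ by suitable subsets of the new path edges and using that these extensions are injective in the pair $(E', S)$. Your contribution is simply to fill in the details the paper leaves implicit --- the characterization of admissible $S$ as words with no two consecutive zeros, the explicit list of five such subsets, and the careful check of the boundary case $n = 2$ --- all of which are accurate.
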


We will need an algorithm to count a number of edge coverings of an arbitrary graph. This algorith can be presented using the next proposition.

\begin{proposition}
Let $G = (V, E)$ be a connected graph, and $e = (uv) \in E$ --- and edge in this graph that connects vertices $u$ and $v$. Then $\alpha(G) = 
2 \alpha(G \setminus e) + \alpha(G \setminus u) + \alpha(G \setminus u) + \alpha(G \setminus \{u, v\})$.
\end{proposition}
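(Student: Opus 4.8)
The plan is to prove the (evidently intended) recurrence
$$\alpha(G) = 2\,\alpha(G \setminus e) + \alpha(G \setminus u) + \alpha(G \setminus v) + \alpha(G \setminus \{u,v\}),$$
where the repeated $\alpha(G \setminus u)$ in the printed statement should read $\alpha(G \setminus u) + \alpha(G \setminus v)$: a path on three vertices $u - v - w$ with $e = (uv)$ already refutes the symmetric version (there $\alpha(G)=1$ but $2\alpha(G\setminus u)+\dots$ overcounts). The whole argument is a piece of bookkeeping over subsets of $E \setminus \{e\}$, organised by whether the endpoints $u$ and $v$ are forced to be covered.

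First I would split every edge covering $E'$ of $G$ according to whether $e \in E'$, writing $F := E' \setminus \{e\} \subseteq E \setminus \{e\}$. Since $e$ is incident only to $u$ and $v$, every vertex of $V \setminus \{u,v\}$ must be covered by $F$ regardless of the choice of $e$; only the fates of $u$ and $v$ depend on that choice. Accordingly I would classify the subsets $F \subseteq E \setminus \{e\}$ that cover all of $V \setminus \{u,v\}$ into four types, according to whether $F$ covers $u$ and whether it covers $v$, with counts $a$ (both covered), $b$ ($u$ yes, $v$ no), $c$ ($u$ no, $v$ yes), $d$ (neither). If $e \notin E'$ then $F = E'$ must itself cover $u$ and $v$, contributing exactly $a$; if $e \in E'$ then $u,v$ are covered by $e$ and $F$ need only cover $V \setminus \{u,v\}$, contributing $a+b+c+d$. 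Hence $\alpha(G) = 2a + b + c + d$, which already explains the factor $2$: a covering in which both endpoints are covered independently of $e$ may keep or drop $e$ freely.

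The remaining, and most delicate, step is to identify the four counts with deletion quantities. I would partition $E \setminus \{e\}$ into the edges incident to $u$, those incident to $v$, and those incident to neither; these classes are disjoint because $e$ is the only edge between $u$ and $v$, as the paper forbids multiple edges. Then $F$ covers $u$ iff it uses an edge of the first class, and likewise for $v$, so $a = \alpha(G \setminus e)$ reads off immediately. For $d$, forbidding coverage of both endpoints forces $F$ to avoid every edge incident to $u$ or $v$, i.e. to lie in the edge set of $G \setminus \{u,v\}$, while the surviving requirement is precisely that it cover $V \setminus \{u,v\}$; thus $d = \alpha(G \setminus \{u,v\})$. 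The same reasoning gives $c = \alpha(G \setminus u)$ and $b = \alpha(G \setminus v)$: forbidding coverage of $u$ is exactly deleting $u$ together with its incident edges, after which ``cover everything else and cover $v$'' is literally the edge-covering condition on $G \setminus u$ (whose vertex set is $V\setminus\{u\}=(V\setminus\{u,v\})\cup\{v\}$ and whose edges are exactly those of $E\setminus\{e\}$ missing $u$). I expect the care to be needed precisely here, in checking that the vertex and edge sets match under each deletion and that these coverage conditions translate without loss. Substituting the identifications into $\alpha(G) = 2a + b + c + d$ yields the stated recurrence; a final remark fixing the convention that the empty graph has one edge covering and a graph with an isolated vertex has none keeps the boundary cases consistent.
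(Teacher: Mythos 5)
Your proof is correct and takes essentially the same route as the paper, which likewise partitions the edge coverings into five cases according to whether $e \in E'$ and which of $u$, $v$ are covered by $E' \setminus \{e\}$; the paper's cases 3 and 4 give $\alpha(G \setminus v)$ and $\alpha(G \setminus u)$ respectively, confirming your reading that the repeated $\alpha(G \setminus u)$ in the printed statement is a typo for $\alpha(G \setminus u) + \alpha(G \setminus v)$ (your three-vertex path counterexample to the literal version is valid). Your identification $d = \alpha(G \setminus \{u,v\})$ also silently corrects a slip in the paper's own case 5, which misprints that count as $\alpha(G \setminus e)$.
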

\begin{proof}
Let $E'$ be an arbitrary edge coverings. There is five options available:

1) There are edges in $E' \setminus e$ that are incident to $u$ and $v$ and $e \notin E'$. There is exactly $\alpha(G \setminus e)$ such coverings.

2) There are edges in $E' \setminus e$ that are incident to $u$ and $v$ and $e \in E'$. There is exactly $\alpha(G \setminus e)$ such coverings.

3) There is at least one  edge in $E' \setminus e$ that is incident $u$ but no edges incident to $v$. Then $e \in E'$ and there is exactly $\alpha(G \setminus v)$ such coverings.

4) There is at least one  edge in $E' \setminus e$ that is incident $v$ but no edges incident to $u$. Then $e \in E'$ and there is exactly $\alpha(G \setminus u)$ such coverings.

5) There is no edge in $E' \setminus e$ that is incident to $u$ or $v$. Then $e \in E'$ and there is exactly $\alpha(G \setminus e)$ such coverings.

If we add all options we will get the equation in the proposition.
\end{proof}

Now we can construct inductive algorithm to count number of edge coverings in an arbitary graph: take an arbitrary edge of this graph and, using the formula above, count number of edges of appropriate subgraphs and then add them (the process will stop for any finite graph, because one inductive step further there is at least one edge less). If graph has no edges then if it has no vertices either there is one edge covering (empty), and if it has at least one vertex, there is no edge coverings of such graph.

This algorithm is slow and not optimal. Luckily, we won't be using it much.

\begin{proposition}
\label{SevenGraphs}
There is exactly 7 different atomic bipartite graphs with number of edge coverings no more than 67.
\end{proposition}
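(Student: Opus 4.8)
The plan is first to pin down exactly which graphs are atomic, and only then to bound $\alpha$ and enumerate. I would begin by proving the characterization: a connected graph is atomic if and only if it is a single edge or is $2$-connected. If $G$ has a cut vertex $v$, splitting $G$ at $v$ into the "sides" of $v$ produces a decomposition whose pieces pairwise meet only in $v$ and whose decomposition graph is a tree, so $G$ is not atomic; conversely, in any decomposition into two parts $G_1,G_2$ sharing a single vertex $v$, that vertex must separate $G_1\setminus v$ from $G_2\setminus v$, so $v$ is a cut vertex, and neither $K_2$ nor a $2$-connected graph has one. Hence the atomic bipartite graphs are precisely $K_2$ (with $\alpha(K_2)=1$) together with the $2$-connected bipartite graphs, and the whole problem reduces to classifying the latter with $\alpha\le 67$.

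Next I would dispose of the cycles. A $2$-connected bipartite graph that happens to be a cycle is an even cycle $C_{2k}$, and reading an edge covering as a cyclic $0/1$ word on the edges in which no two consecutive letters are $0$ identifies $\alpha(C_n)$ with the $n$-th Lucas number; thus $\alpha(C_4)=7$, $\alpha(C_6)=18$, $\alpha(C_8)=47$, while $\alpha(C_{10})=123>67$. Exactly three cycles survive. For the non-cycle case I would use ear decompositions: every $2$-connected graph is obtained from a cycle by successively attaching ears (open paths with endpoints in the current graph and new interior vertices), and by Lemmas~\ref{ZeroNewPoints}, \ref{OneNewPoint}, \ref{ManyNewPoints} an ear of length $1$, $2$, or $\ge 3$ multiplies $\alpha$ by at least $2$, $3$, or $5$ respectively. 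Since every factor is $\ge 2$ and $\alpha$ never decreases, I can run a bounded search rooted at the surviving base cycles. From $C_8$ ($47$) any ear already forces $\alpha\ge 94$; from $C_6$ ($18$) a second ear forces $\alpha\ge 72$, so at most one ear is allowed, and its admissible types give $\Theta_{1,3,3}$ ($43$, via a single-edge "diameter" ear, which $C_6$ does admit) and $\Theta_{2,2,4}$ ($66$); from $C_4=K_{2,2}$ no single-edge ear exists because $C_4$ is complete bipartite, and ears of length $2,3,4$ yield $K_{2,3}$ ($25$), $\Theta_{1,3,3}$ ($43$), $\Theta_{2,2,4}$ ($66$), with length $\ge 5$ already exceeding $67$. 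Here $\Theta_{a,b,c}$ denotes two hubs joined by three internally disjoint paths of lengths $a,b,c$, and I would compute these few counts by a transfer-matrix/inclusion–exclusion argument over the three hub-to-hub paths.

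Finally I would close off higher circuit rank. A $2$-connected graph of circuit rank $2$ is exactly a theta graph, so the rank-$2$ survivors are precisely $K_{2,3}$, $\Theta_{1,3,3}$, $\Theta_{2,2,4}$; to keep this list finite I note that $\alpha(\Theta_{a,b,c})$ is monotone in each path length, so it suffices to check the boundary cases $\Theta_{2,2,6},\Theta_{2,4,4},\Theta_{1,3,5},\Theta_{3,3,3}$, each of which exceeds $67$. Any graph of circuit rank $\ge 3$ is obtained from some theta graph by attaching at least one further ear: if that theta already has $\alpha>67$ the result does too, and if it is $\Theta_{1,3,3}$ or $\Theta_{2,2,4}$ then one more ear gives $\alpha\ge 2\cdot 43>67$. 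The single delicate point, which I expect to be the main obstacle, is $K_{2,3}$ ($\alpha=25$): a priori a single-edge ear (factor only $2$) could keep us under $67$, but $K_{2,3}$ is complete bipartite, so every pair of opposite-part vertices is already adjacent and no bipartite single-edge ear exists; hence any ear attached to it has length $\ge 2$ and multiplies $\alpha$ by at least $3$, giving $\alpha\ge 75>67$. This rules out all circuit rank $\ge 3$. Collecting $K_2$, the three cycles $C_4,C_6,C_8$, and the three theta graphs $K_{2,3},\Theta_{1,3,3},\Theta_{2,2,4}$ gives exactly $1+3+3=7$ atomic bipartite graphs with $\alpha\le 67$.
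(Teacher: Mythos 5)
Your proposal is correct in substance and arrives at the same seven graphs (a single edge, $C_4$, $C_6$, $C_8$, $K_{2,3}$, $\Theta_{1,3,3}$, $\Theta_{2,2,4}$), but by a genuinely different route. The paper never isolates $2$-connectivity as such: it derives directly from atomicity an ad hoc ``return path'' statement (for any connected subgraph $X$ with an edge, an outside vertex forces a path that leaves $X$ and re-enters it avoiding the attachment vertex and edge), and then organizes the whole case analysis by the \emph{maximum} cycle length: length $\ge 10$ is excluded via the Lucas-number count $123$ from \cite{Old} together with subgraph monotonicity of $\alpha$ quoted from \cite{Core}, and lengths $8$, $4$, $6$ are treated in turn using Lemmas \ref{ZeroNewPoints}, \ref{OneNewPoint}, \ref{ManyNewPoints} exactly as you do. You instead prove the clean characterization (atomic $=$ $K_2$ or $2$-connected), invoke Whitney's ear-decomposition theorem, and stratify by circuit rank, classifying bipartite theta graphs by explicit transfer-matrix counts. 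Your route buys a reusable structure theorem plus self-contained numerics ($43$, $66$, and the boundary thetas), and it makes explicit a point the paper passes over silently: the dangerous case of $K_{2,3}$ plus a single-edge ear (which would give only $25\cdot 2 = 50 \le 67$) is impossible because $K_{2,3}$ is complete bipartite; the paper never confronts this because in its max-cycle-$4$ branch it only ever attaches new vertices to $K_{2,3}$. The cost of your route is reliance on classical $2$-connectivity theory, which the paper's bespoke decomposition machinery replaces; note also that with the paper's idiosyncratic definition of decomposition (the decomposition graph must be a tree) your characterization genuinely needs a proof, which you supply.

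Two small repairs are needed. First, in your atomicity characterization: if the cut vertex $v$ has $k \ge 3$ sides, taking all the sides as pieces makes the decomposition graph complete on $k$ vertices, which is not a tree; you must group the sides into two subgraphs meeting only at $v$ (this is precisely the paper's remark that any non-atomic graph admits a two-piece decomposition). Second, the monotonicity of $\alpha(\Theta_{a,b,c})$ in each path length is true but does not follow formally from the signed inclusion--exclusion formula you propose to compute with, since the subtracted terms also grow; prove it instead from the positive decomposition $\alpha = \sum \prod_{i} N^{(i)}_{s_i t_i}$ over branch coverage states, where $N_{st}$ counts precovering-type subsets of a hub-to-hub path according to whether the two hubs are covered, after checking via the transfer recursion $N'_{s0} = N_{s1}$, $N'_{s1} = N_{s0} + N_{s1}$ that each $N_{st}$ is nondecreasing in the path length (inductively $N_{s1} \ge N_{s0}$). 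With these patches your boundary checks $\alpha(\Theta_{3,3,3}) = 110$, $\alpha(\Theta_{1,3,5}) = 112$, $\alpha(\Theta_{2,2,6}) = 173$, $\alpha(\Theta_{2,4,4}) = 174$ do close off circuit rank $2$ (and they also justify your passing claim that an ear of length $\ge 5$ on $C_4$ exceeds $67$, which the factor-$5$ bound of Lemma \ref{ManyNewPoints} alone would not give, since $7 \cdot 5 = 35$), and the rest of your argument stands.
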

\begin{proof}
Graph with two vertices and one edge between them is atomic. Obviously, any other connected graph with a vertex of degree one is not atomic. It can be split on a graph, consisting from this vertex and it's incident edge and the rest of the graph.

Let $G = (V, E)$ be a connected bipartite atomic graph with more than one edge. Each vertex of this graph has degree of at least two. It means that such graph contains a cycle. Consider the cycle of maximum length in this graph. Since $G$ is bipartite, any cycle has even number of edges.

Let $X$ be a connected subgraph of graph $G$ with at least one edge. If $G$ has vertices which do not belong to $X$, then there is at least one edge from vertex $u$ in $X$ to vertex in $G \setminus X$. Let's denote vertex in $G \setminus X$ as $v$ and edge as $e = (uv)$. Let $V'$ be a set of vertices in $G$ such that for any vertex from $V'$ there exists a path to vertex $v$ which does not contain vertex $u$ and edge $e$. All of the vertices in $X$ either in $V'$ together or not in $V'$ together. If the y are not, graph $G$ can be split in two subgraphs connected only by a vertex $u$ which contradicts the assumption that $G$ is atomic. Then there is a path from vertex $v$ to vertex from $X$ which does not contain edge $e$ and vertex $u$.

Assume that $G$ has a cycle of 10 or more edges. As was proven in \cite{Old}, such cycle has at least 123 edge coverings all by itself and because of \cite[т.~6.1]{Core} graph $G$ has no less coverings which contradicts assumption that $\alpha(G) \le 67$. Then there is no cycle of 10 or more edges.

Let the maximum number of edges in a cycle be 8. This cycle has 47 edge coverings. Since $47 * 2 > 67$, then using \ref{ZeroNewPoints} $G$ has no edges between vertices of this cycle. If there is no other vertices then $G$ is a cycle of length 8 (see p.2). If there are other vertices, then using previous statement with this cycle as an $X$ we can see that there is a path with at least two edges between vertices of a cycle which by \ref{OneNewPoint} and \ref{ManyNewPoints} gives us that $\alpha(G) \ge 47 *3 > 67$. Then there is no additional vertices.

Let the maximum number of edges in a cycle be 4, this cycle has 7 edge coverings. Then for any $X$ additional path can have no more than two edges. Let us take cycle as $X$. There can be no additional edges between vertices of cycle, because graph $G$ is bipartite. If there are no vertices in $G \setminus X$ then $G$ is a cycle of 4 edges (see p.3). If there is at least one additional vertex, then two edges can be added to $X$. Cycle with two additional edges can look only like one on the p.4, it has 25 edge coverings. Let's take this subgraph as $X$. If there is no other vertices then $G = X$. If there is a vertex in $G \setminus X$ then there is two edges that can be added to $X$ and by \ref{OneNewPoint} $\alpha(G) \ge 25 * 3 > 67$. Then there is no additional vertices.

Let the maximum number of edges in a cycle be 6, this cycle has 18 edge coverings. If $G$ has no additional vertices, then it can have no more that one additional edge ($18 * 4 > 67 > 18 * 2$). If there is no additional edges, then $G$ is a cycle of 6 edges (see p.5). If there is additional edge, it can be placed only as it is done on p.6 since $G$ is bipartite.

If graph $G$ has additional vertices, then there is only one such vertex ($18 * 3 < 67 < 18 * 5< 18 * 3 *3$). One additional vertex can be placed only as shown on p.7, this graph has 66 edge coverings which means there are no other edges.
\end{proof}

\section{Precoverings and formulas for counting their number.}

We will select one vertex in a graph and call it \emph{topmost} or \emph{selected} and  denote it in the following way:

$$\mathcal{G} = (G, v),\qquad G = (V, E),\qquad v \in V.$$

\begin{definition}
Let $\mathcal{G} = (G, v)$ be a connected graph with selected vertex. We will define four functions:

$\alpha(\mathcal{G}) = \alpha(G),$

$\beta(\mathcal{G}) = \alpha(G'),$

где $G' = (V', E')$, а $V' = V \setminus \{v\}, E' = E \setminus \{(uv)| u \in V\}$
$s(\mathcal{G}) = \alpha(\mathcal{G}) + \beta(\mathcal{G})$,

$f(\mathcal{G}, X) = \begin{cases}
s(\mathcal{G}),&\text{if $v \in X$;}\\
\alpha(\mathcal{G}),&\text{if $v \notin X$.}
\end{cases}
$

\end{definition}

It is useful to think that for graph $\mathcal{G}_0 = (\{v\},v)$ with only one vertex this functions are equal to $\alpha(\mathcal{G}_0) = 0$ and $\beta(\mathcal{G}_0) = 1$ respectively.

\begin{definition}
\emph{An edge precovering} of graph $G$ with selected vertex $v$ is such a subset $E'$ of its vertices that any vertex in the graph $G$, except $v$ is incident to at least one edge from $E'$ (vertex $v$ may or may not be incident to edges from $E'$).
\end{definition}

\begin{proposition}
\label{HalfCover}
Let $\mathcal{G} = (G, v)$ be a connected graph with selected vertex. Then it has exactly $s(\mathcal{G})$ edge precoverings.
\end{proposition}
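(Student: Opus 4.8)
The plan is to partition the set of all edge precoverings of $\mathcal{G} = (G, v)$ according to whether the selected vertex $v$ is incident to an edge of the precovering, and then to recognize each of the two resulting classes as a family whose size is already named by $\alpha$ or $\beta$.

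First I would look at those precoverings $C$ in which $v$ \emph{is} incident to at least one edge of $C$. By the definition of a precovering every vertex other than $v$ is already incident to some edge of $C$; adding that $v$ is covered as well, we see that $C$ covers \emph{every} vertex of $G$, i.e.\ $C$ is an ordinary edge covering of $G$. Conversely, any edge covering of $G$ is in particular a precovering in which $v$ happens to be covered. Thus this class is in bijection with the edge coverings of $G$ and contributes exactly $\alpha(G) = \alpha(\mathcal{G})$ precoverings.

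Next I would examine those precoverings $C$ in which $v$ is incident to \emph{no} edge of $C$. Then $C$ uses no edge of the form $(uv)$, so $C$ is contained in the edge set of the deleted graph $G' = (V \setminus \{v\},\, E \setminus \{(uv)\mid u \in V\})$. The requirement that every vertex except $v$ be covered is now exactly the requirement that $C$ cover all the vertices of $G'$, so these precoverings are precisely the edge coverings of $G'$. This class is therefore in bijection with the edge coverings of $G'$ and contributes $\alpha(G') = \beta(\mathcal{G})$ precoverings.

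Since the two classes are disjoint and together exhaust all precoverings, summing gives $\alpha(\mathcal{G}) + \beta(\mathcal{G}) = s(\mathcal{G})$, as claimed. To be safe I would also check the stated convention on the one-vertex graph $\mathcal{G}_0 = (\{v\}, v)$: here the empty set is the unique precovering, matching $s(\mathcal{G}_0) = 0 + 1 = 1$. No step in this argument is genuinely hard; the only point needing care is confirming that the ``$v$ uncovered'' case really corresponds to coverings of $G'$ — that is, that deleting $v$ together with all of its incident edges leaves exactly the edges available to such a precovering and exactly the vertices that still need to be covered.
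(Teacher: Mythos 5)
Your proposal is correct and follows essentially the same route as the paper: both partition the precoverings into those that cover $v$ (in bijection with edge coverings of $G$, counted by $\alpha(\mathcal{G})$) and those that leave $v$ uncovered (in bijection with edge coverings of the deleted graph $G'$, counted by $\beta(\mathcal{G})$), then sum. Your version merely spells out the bijections and the one-vertex convention a bit more explicitly than the paper does.
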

\begin{proof}
Let $E'$ be a precovering of $\mathcal{G}$. If $E'$ is not a covering of $G$ then $E'$ is a covering of $G' = (V \setminus \{v\}, E \setminus \{(uv)| u \in V\})$. There is exactly $\beta(\mathcal{G})$ such coverings. If $E'$ is a covering of $G$ then it is a precovering of $\mathcal{G}$ and there is $\alpha(\mathcal{G})$ such coverings.
\end{proof}

\begin{proposition}
Let $\mathcal{G}_i = (G_i, v), i = 1, \ldots, n$ be several graphs sharing a common selected vertex and having no other vertices, common for any two graphs. Then $\mathcal{G} = (\bigcup\limits_{i=1}^n G_i, v)$ has the following properties:

1) $\beta(\mathcal{G}) = \prod\limits_{i=1}^n \beta(\mathcal{G}_i),$

2) $\alpha(\mathcal{G}) = \prod\limits_{i=1}^n  s(\mathcal{G}_i) - \prod\limits_{i=1}^n \beta(\mathcal{G}_i).$

\end{proposition}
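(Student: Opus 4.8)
The plan is to exploit the hypothesis that no two of the $G_i$ share a vertex other than $v$. This forces the edge sets $E_i$ to be pairwise disjoint and forces every vertex $w \ne v$ to lie in exactly one piece $G_i$. Consequently any edge set $E' \subseteq E$ splits uniquely as $E' = \bigcup_{i=1}^n E'_i$ with $E'_i = E' \cap E_i$, and for a vertex $w \ne v$ the coverage of $w$ by $E'$ is decided entirely inside the unique piece containing it. This reduces both claims to bookkeeping, piece by piece, of how the selected vertex $v$ is treated.

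For part 1 I would first observe that the graph underlying $\beta(\mathcal{G})$, namely $G$ with $v$ and all edges at $v$ deleted, is precisely the disjoint union of the graphs $G_i \setminus \{v\}$: once $v$ is removed the pieces share no vertices at all. Since the number of edge coverings of a graph with several connected components is the product of the numbers of edge coverings of the components (the convention fixed at the start of Section~3), this immediately gives $\beta(\mathcal{G}) = \prod_{i=1}^n \alpha(G_i \setminus \{v\}) = \prod_{i=1}^n \beta(\mathcal{G}_i)$.

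For part 2 I would count the edge coverings of $\mathcal{G}$ by first relaxing to precoverings and then correcting for $v$. By Proposition~\ref{HalfCover} each $\mathcal{G}_i$ has exactly $s(\mathcal{G}_i)$ precoverings. Using the decomposition $E' = \bigcup_i E'_i$, an arbitrary $E'$ covers every vertex except possibly $v$ if and only if each $E'_i$ covers every vertex of $G_i$ except possibly $v$, i.e.\ iff each $E'_i$ is a precovering of $\mathcal{G}_i$; these choices are independent, so $\mathcal{G}$ has exactly $\prod_{i=1}^n s(\mathcal{G}_i)$ precoverings. Among these, $v$ is left uncovered exactly when no $E'_i$ meets $v$, that is when each $E'_i$ is a precovering avoiding $v$ --- equivalently a covering of $G_i \setminus \{v\}$, of which there are $\beta(\mathcal{G}_i)$. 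Subtracting these \emph{bad} tuples leaves the precoverings in which $v$ is covered, which are exactly the coverings of $G$:
$$\alpha(\mathcal{G}) = \prod_{i=1}^n s(\mathcal{G}_i) - \prod_{i=1}^n \beta(\mathcal{G}_i).$$

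The only real care needed --- and the step I would treat as the crux --- is the factorization claim that an edge set of $G$ is a (pre)covering exactly when each of its restrictions $E'_i$ is a (pre)covering of the corresponding piece. This hinges entirely on the shared-vertex hypothesis: it is what guarantees that the coverage conditions at distinct non-selected vertices are independent and each localized to a single piece, and that the only interaction between pieces is through the common vertex $v$. I would state this factorization explicitly, check it against the degenerate single-vertex convention $\alpha(\mathcal{G}_0) = 0$, $\beta(\mathcal{G}_0) = 1$, and then the two displayed identities drop out as above.
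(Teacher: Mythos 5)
Your proposal is correct and follows essentially the same route as the paper: part 1 via multiplicativity of $\alpha$ over the connected components of $G$ with $v$ removed, and part 2 by factoring precoverings as independent choices of precoverings $E'_i = E' \cap E_i$ (so that $s(\mathcal{G}) = \prod_i s(\mathcal{G}_i)$ by Proposition~\ref{HalfCover}) and then subtracting the $\prod_i \beta(\mathcal{G}_i)$ precoverings leaving $v$ uncovered. Your explicit emphasis on the factorization step and the check against the single-vertex convention is a minor polish on the paper's argument, not a different method.
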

\begin{proof}
1) If we remove vertex $v$ from $\bigcup\limits_{i=1}^n G_i$ and edges, incident to it, graph $G$ will fall apart on $n$ connected graphs with $\beta(\mathcal{G}_1), \ldots, \beta(\mathcal{G}_n)$ each. Number of edge coverings of their union equals product of numbers of their coverings, i.e. $\prod\limits_{i=1}^n \beta(\mathcal{G}_i)$.

2) Let $E'$ be a precovering of $\bigcup\limits_{i=1}^n G_i$ and $E'_i = E' \cap G_i$. Clearly, $\bigcup\limits_{i=1}^n E'_i = E'$ and $E'_i$ is a precovering of graph $\mathcal{G}_i$ with selected vertex. So, each precovering defines and is uniquely defined by a set of $n$ nonintersecting precovering. Then number of precoverings of graph $\mathcal{G}$ can be count as  $s(\mathcal{G}) = \prod\limits_{i=1}^n  s(\mathcal{G}_i)$ and number of edge coverings --- as $\alpha(\mathcal{G}) = s(\mathcal{G}) - \beta(\mathcal{G}) = \prod\limits_{i=1}^n  s(\mathcal{G}_i) - \prod\limits_{i=1}^n \beta(\mathcal{G}_i)$.
\end{proof}

\begin{definition}
Let $G = (V, E)$ be a connected graph, $V = \{v_1, v_2, \ldots, v_n\}$, $V'$ be a subset of $V$ (possibly, empty), then graph $G(V') = (V', E')$, where $E'$ is a set of all edges in $E$ such that for both of it's vertices are in $V'$.
\end{definition}

\begin{proposition}
Let $\mathcal{G}_0 = (G_0, v)$ be a connected graph with selected vertex. Let $E_0$ be a set of it's edges and $V_0 = \{v, v_1, \ldots, v_n\}$ be a set of it's vertices. Also let $\mathcal{G}_i = (G_i, v_i)$ be graphs with selected vertices such that different graphs $G_i$ and $G_j$ have no common vertices or edges and $G_i \cap G_0 = \{v_i\}$. Then $\mathcal{G} = (\bigcup\limits_{i=0}^n G_i, v)$ has the following properties:

1) $\alpha(\mathcal{G}) = \sum\limits_{V' \subset V_0 | v \in V'} \alpha(G_0(V')) \prod\limits_{i=1}^n f(\mathcal{G}_i, V')$, 

2) $\beta(\mathcal{G}) = \sum\limits_{V' \subset V_0 | v \notin V'} \alpha(G_0(V')) \prod\limits_{i=1}^n f(\mathcal{G}_i, V')$.

\end{proposition}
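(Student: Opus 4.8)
The plan is to exploit the disjointness of the edge sets. Since each gadget $G_i$ meets $G_0$ only in the single vertex $v_i$ and meets every other gadget nowhere, the edge sets $E_0, E_1, \ldots, E_n$ are pairwise disjoint and partition $E$. So for any (pre)covering $E'$ of $G = \bigcup_{i=0}^n G_i$ I would set $E'_0 = E' \cap E_0$ and $E'_i = E' \cap E_i$, and let $V' \subseteq V_0$ be the set of vertices of $G_0$ incident to at least one edge of $E'_0$. The whole argument is then to count, for each admissible $V'$, the number of ways of choosing $E'_0$ and the gadget pieces $E'_1,\ldots,E'_n$, and to sum over $V'$.

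First I would show that, with $V'$ fixed, the number of admissible choices of $E'_0$ is exactly $\alpha(G_0(V'))$. Every edge of $E'_0$ lies in $E_0$ and both of its endpoints are incident to it, hence lie in $V'$; thus $E'_0 \subseteq E(G_0(V'))$. By the definition of $V'$ every vertex of $V'$ is covered by $E'_0$, so $E'_0$ is an edge covering of the induced subgraph $G_0(V')$ whose covered set is precisely $V'$; conversely every edge covering of $G_0(V')$ has covered set exactly $V'$. This gives the count $\alpha(G_0(V'))$, with the healthy convention that any $V'$ containing a vertex isolated in $G_0(V')$ contributes $0$, exactly matching the fact that no $E'_0$ can realize such a $V'$.

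Next, with $V'$ fixed, I would argue that the restrictions $E'_i$ are chosen independently of one another and of $E'_0$, because the only vertex of $G_i$ that can be touched by edges outside $G_i$ is $v_i$. If $v_i \in V'$, then $v_i$ is already covered by $E'_0$, so $E'_i$ need only cover the remaining vertices of $G_i$ with $v_i$ left optional; by Proposition~\ref{HalfCover} there are $s(\mathcal{G}_i)$ such precoverings. If $v_i \notin V'$, then $E'_i$ must in addition cover $v_i$, i.e.\ it must be a genuine covering of $G_i$, of which there are $\alpha(\mathcal{G}_i)$. In both cases the count is $f(\mathcal{G}_i, V')$, so the number of gadget choices is $\prod_{i=1}^n f(\mathcal{G}_i, V')$.

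Finally I would assemble the identities. Since distinct $V'$ arise from disjoint families of coverings, the total count is the sum over $V'$ of $\alpha(G_0(V'))\prod_{i=1}^n f(\mathcal{G}_i, V')$. For $\alpha(\mathcal{G})$ the vertex $v$ must be covered, and as $v$ carries no gadget it can only be covered by $E'_0$, forcing $v \in V'$; this yields the first identity. For $\beta(\mathcal{G})$ we instead count coverings of $G$ with $v$ and its incident edges deleted, so $v$ need not be covered and no edge of $E'_0$ may touch $v$, which is exactly the condition $v \notin V'$ (and for such $V'$ the subgraph $G_0(V')$ coincides with the corresponding induced subgraph of $G_0 \setminus v$); this yields the second identity. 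I expect the main obstacle to be the bookkeeping at the shared vertices — verifying cleanly that membership $v_i \in V'$ is what switches the gadget contribution from $\alpha(\mathcal{G}_i)$ to $s(\mathcal{G}_i)$, and that the assignment $E' \mapsto (V', E'_0, E'_1, \ldots, E'_n)$ is a genuine bijection with no overcounting across different $V'$.
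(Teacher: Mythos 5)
Your proof is correct and takes essentially the same approach as the paper's: fix the set $V' \subseteq V_0$ of $G_0$-vertices covered by $E' \cap E_0$, count the choices of $E'_0$ as $\alpha(G_0(V'))$ and the choices for each attached graph as $f(\mathcal{G}_i, V')$ via the precovering count of Proposition~\ref{HalfCover}, then sum over $V'$ with the constraint $v \in V'$ for $\alpha(\mathcal{G})$ and $v \notin V'$ for $\beta(\mathcal{G})$. Your explicit handling of the isolated-vertex convention (so that impossible $V'$ contribute $0$) and of the bijectivity of $E' \mapsto (V', E'_0, E'_1, \ldots, E'_n)$ is a bit more careful than the paper's wording, but the argument is the same.
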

\begin{proof}

1) Fix $V' \subset V_0$. Let $E'$ be an edge covering of $G = \bigcup\limits_{i=0}^n G_i$ and $E'_i = E' \cap G_i$ such that for any vertex $u \in V_0$ holds the following: $u \in V'$ if and only if there is an edge in $E'_0$ incident to $u$. Apparently, $v \in V'$, because all edges, incident $v$ in graph $G$, belong to $E_0$.

Let us count a number of edge coverings for this fixed $V'$. Since $G_i$ and $G_0$ intersect only by a vertex, and $G_i$ and $G_j$ don't intersect at all, then all possible $E'_i$, $i \ne 0$ are precoverings. Then any edge covering $E'$ uniquely defines and is defined by a set if precoverings $E'_i$ and set $E'_0$.

If $v_i \notin V'$, then $E'_i$ hs to be a covering and any covering will do. There is $\alpha(\mathcal{G}_i)$ such coverings. If $v_i \in V'$ then $E'_i$ can be any precovering of $\mathcal{G}_i$, there is $s(\mathcal{G}_i)$ of them \ref{HalfCover}. Then there exist $f(\mathcal{G}_i, V')$ possible values of $E'_i$.

Set $E'_0$ can be any set of edges such that for any vertex $u \in V'$ there is an edge in $E'_0$ incident to it and for any vertex $w \notin V'$ there are none. Then any suitable set od edges $E'_0$ is a covering of $G(V')_0$, there is $\alpha(G_0(V'))$ such coverings.

Then there is $\alpha(G_0(V')) \prod\limits_{i=1}^n f(\mathcal{G}_i, V')$ edge coverings such that abovementioned condition on $V'$ works. By adding all possible $V'$ we get formula in the proposition.

2) The proof is exactly as in case 1 with the exception, that $v$ don't belong $V'$, because there is no edges, incident to $v$ in a covering of $G \setminus \{v\}$.
\end{proof}

\section{Algorithm and calculations.}

Two previous statements from previous chapter describe two ways of getting new graphs with selected vertex from given graphs with selected vertices. First, one can \"glue\" several graphs using selected vertex. Second, one can glue several graphs with selected vertex to a given atomic graph with selected vertex, gluing selected vertices in graphs to different vertices of atomic graph (except selected vertex of atomic graph, that stays unglued to anything).

We will prove that using this tow ways one can obtains all connected graphs with selected vertices wit hno more than given number of edge coverings.

\begin{remark}
Let $G$ be non-atomic graph and $G_1, \ldots, G_n$ is it's decomposition. Then vertex $v \in G$ may belong either one of graphs $G_1, \ldots, G_n$, or two of them.
\end{remark}

\begin{lemma}
\label{DeeperLevel}
Let $G = (V,E)$ be a connected non-atomic graph and $G_1, \ldots, G_n$ is a decomposition of this graph, $K_1, \ldots, K_k$ is a decomposition of graph $G_1$, $k \ge 2$. Let vertex $v \in G_1$ belong to no graph of $G_2, \ldots, G_n$ (it belongs only to $G_1$). Then if $v$ belongs exactly one pf graphs $K_1, \ldots, K_k$, namely $K_1$, then there exists decomposition of $G$ with subgraphs $K_1, L_1, \ldots, L_m$ such that $v \notin L_i$. If vertex $v$ belongs two graphs from $K_1, \ldots, K_n$, then there exists decomposition of $G$ with subgraphs $L_1, L_2$ such that $v \in L_i$.
\end{lemma}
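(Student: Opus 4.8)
The plan is to merge the given decomposition $\{G_1,\dots,G_n\}$ of $G$ with the decomposition $\{K_1,\dots,K_k\}$ of $G_1$ and then re-group the resulting pieces according to where $v$ sits, treating the two cases separately. The whole argument would rest on one structural fact that I would establish first: \emph{in any decomposition, every cycle of the ambient graph is contained in a single piece.} To see this, note that if $A,B$ are two pieces sharing the vertex $s$, then, because the decomposition graph is a tree, deleting the edge $AB$ splits it into two subtrees whose unions of pieces are subgraphs of $G$ meeting only in $s$; hence $s$ is a cut vertex of $G$. A cycle using edges from two distinct pieces would then have to cross (and so pass twice through) some such cut vertex, which is impossible. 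I would also record the elementary observation that, since $v$ lies only in $G_1$ and (in Case~1) only in $K_1$, every edge incident to $v$ belongs to a piece containing $v$ and therefore lies in $K_1$.

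For Case~1 (where $v$ belongs only to $K_1$), let $R$ be the subgraph of $G$ spanned by the edges $E\setminus E(K_1)$. By the observation above $v\notin V(R)$. The key claim is that each connected component $C$ of $R$ meets $K_1$ in exactly one vertex. Connectedness of $G$ gives at least one. For ``at most one'', suppose $C$ met $K_1$ in two vertices $x\neq y$; then a path in $C\subseteq R$ from $x$ to $y$ together with a path in $K_1$ from $x$ to $y$ would form a cycle using edges of both $K_1$ and $R$. Applying the structural fact to $\{G_1,\dots,G_n\}$ forces this cycle into one piece, which must be $G_1$ (it meets $K_1\subseteq G_1$); but the edges of $R$ inside $G_1$ come only from $K_2\cup\dots\cup K_k$, so applying the fact again to $\{K_1,\dots,K_k\}$ produces a cycle meeting both $K_1$ and some $K_i$ with $i\ge 2$ --- a contradiction. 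With the claim in hand, $\{K_1\}\cup\{C: C\text{ a component of }R\}$ satisfies all four conditions: the components are pairwise vertex-disjoint, each shares exactly one vertex with $K_1$, the edges and vertices are covered, and the decomposition graph is a star centred at $K_1$, hence a tree. Taking the $L_i$ to be these components yields a decomposition with $K_1$ a piece and $v\notin L_i$ for every $i$.

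For Case~2 (where $v$ belongs to two of the $K_i$), those two pieces are adjacent in the decomposition graph of $G_1$ and share only $v$, so by the cut-vertex property above $v$ is a cut vertex of $G_1$ and $G_1-v$ has at least two components. Since each $G_j$ with $j\ge 2$ is attached to $G_1$ at a single vertex different from $v$, a path in $G-v$ joining two different sides of $G_1-v$ would, combined with a $G_1$-path through $v$, create a cycle through $v$ whose two edges at $v$ lie on opposite sides; the structural fact, applied to $\{G_1,\dots,G_n\}$ and then to $\{K_1,\dots,K_k\}$, would confine this cycle to a single $K_i$, contradicting that its two edges at $v$ belong to the two distinct pieces sharing $v$. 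Hence $v$ is a cut vertex of $G$, and $G-v$ has components $D_1,\dots,D_p$ with $p\ge 2$. Partitioning them into two nonempty families and letting $L_1,L_2$ consist of $v$ together with the edges meeting each family gives two connected subgraphs that cover $E$ and meet only in $v$; this is a two-piece decomposition with $v\in L_1$ and $v\in L_2$, as required.

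The hard part will be the ``at most one vertex'' step in Case~1, i.e.\ excluding a component of $R$ that touches $K_1$ twice, and the analogous cut-vertex verification in Case~2; both reduce to the single cycle-confinement lemma, which must be invoked for the outer decomposition $\{G_1,\dots,G_n\}$ and the inner decomposition $\{K_1,\dots,K_k\}$ in succession. Once that lemma is proved, everything else is routine checking of the four decomposition conditions.
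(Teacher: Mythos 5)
Your proposal is correct, but it follows a genuinely different route from the paper's proof. The paper never leaves the combinatorics of decompositions: it merges the two given families into $K=\{K_1,\ldots,K_k,G_2,\ldots,G_n\}$, notes that conditions 1--3 of the definition hold automatically and that condition 4 can fail only through triangles $K_a,K_b,G_c$ of pieces sharing one common vertex, and repairs each triangle by replacing $K_b$ and $G_c$ (choosing labels so that $v\notin K_b$) by the union $K_b\cup G_c$ --- an operation that preserves conditions 1--3, creates no new cycles in the decomposition graph, and never alters the pieces containing $v$; Case 1 is then read off directly, and Case 2 is obtained by deleting the edge $K_1K_2$ from the decomposition tree and taking the unions of the two resulting sides as $L_1,L_2$. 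You instead isolate a structural invariant --- every cycle of the ambient graph lies in a single piece of any decomposition, since adjacent pieces meet in cut vertices determined by the tree structure --- and build the required decompositions from scratch: the components of the subgraph spanned by $E\setminus E(K_1)$ in Case 1, and the two sides of the cut vertex $v$ in Case 2. Your route is more self-contained graph theory, and in Case 1 it buys a bonus the paper only gets later: your decomposition graph is a star centred at $K_1$, which is exactly the normal form that the proof of Lemma \ref{TwoOptions} must afterwards extract from the paper's decomposition by an additional grouping of the tree into components; the paper's repair argument is in turn shorter and avoids any analysis of components of $G$ itself. Two small glosses in your sketch need a line each to be airtight: in Case 1, the union of a path in $C$ and a path in $K_1$ between $x$ and $y$ need not be a simple cycle (the paths may share interior vertices), so one should take the $C$-path shortest among all paths in $R$ joining two distinct vertices of $K_1$, forcing its interior to avoid $V(K_1)$ and making the union a genuine cycle with at least one edge of each kind; and in Case 2, rather than combining an arbitrary $G_1$-path through $v$ with a path in $G-v$, pick neighbours $a,b$ of $v$ with $(va)\in K_1$ and $(vb)\in K_2$ (these exist because $K_1,K_2$ are connected, contain $v$, and contain an edge, and $a\ne b$ since multiple edges are excluded), so that any $a$--$b$ path in $G-v$ closes up to a simple cycle through $v$ whose two edges at $v$ lie in distinct pieces, to which your confinement fact applies verbatim.
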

\begin{proof}

Let's consider a set $K = \{K_1, \ldots, K_k, G_2, \ldots, G_n\}$. The first three conditions, required to call it a decomposition, hold. It can be not a decomposition only if at least three graphs from $K$ share a common vertex which gives us a cycle (there can be no more that three graphs, since no more than two graphs in $K_1, \ldots, K_k$ and no more than two graphs in $G_1, \ldots, G_n$ can share a vertex, then only a triple $K_a, K_b, G_c$  can share a vertex). Then from graphs $K_a$ and $K_b$ no more than one can have a vertex $v$ since two graphs $K_a$, $K_b$ cannot share more than one vertex (without loss of generality we can assume that $v \notin K_b$). Let's replace sets $K_b$ and $G_c$ with set $K_b \cup G_c$ in set $K$. Apparently, for these new graphs the first three conditions will hold, and they will not generate new cycles. Renewed set will be a decomposition of graph $G$ and graphs from decomposition of $G_1$ that contain vertex $v$ will not be changed. There can be two options:

1) vertex $v$ belongs only graph $K_1$ then we got the required decomposition,

2) vertex $v$ belongs graphs $K_1$ and $K_2$, then in decomposition graph of $K$ there is an edge between these two graphs. Since decomposition graph is a tree, set $K$ is split by this edge on two sets $M_1$ and $M_2$, union of all elements of set $M_i$ we will denote as $L_1$ and $L_2$ respectively. Clearly, $L_1 \cap L_2 = \{v\}$, then $L_1, L_2$ is the required decomposition of graph $G$.
\end{proof}

\begin{lemma}
\label{TwoOptions}
Let vertex $v$ belong non-atomic graph $G$, then one of two holds:

1) there exisits a decomposition of graph $G$ with subgraphs $L_1, L_2$ such that $v \in L_i$,

2) there exists a decomposition of graph $G$ with subgraphs $K, L_1, \ldots, L_n$ such that $K$ is an atomic graph, $v \in K$, $v \notin L_i$ and $L_i \cap K \ne \varnothing$ (decomposition graph is a star with its center in $K$).
\end{lemma}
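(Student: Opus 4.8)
The plan is to induct on the number of edges of $G$ (or equivalently on the total size of a decomposition), using Lemma \ref{DeeperLevel} as the engine that lets me descend into a subgraph without losing control of where the selected vertex $v$ sits. Since $G$ is non-atomic, by the second remark in Section 3 it admits a decomposition into exactly two connected subgraphs $G_1, G_2$. By the remark immediately preceding Lemma \ref{DeeperLevel}, the vertex $v$ lies either in just one of $G_1, G_2$ or in both of them (i.e. $v = V_1 \cap V_2$).

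First I would dispose of the easy case. If $v$ belongs to both $G_1$ and $G_2$, then $\{G_1, G_2\}$ is already a two-piece decomposition with $v$ in each piece, which is exactly conclusion~(1), and we are done. So I may assume $v$ lies in exactly one of the two pieces, say $v \in G_1$ and $v \notin G_2$. Now there are two sub-cases according to whether $G_1$ is atomic.

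If $G_1$ is atomic, I claim conclusion~(2) holds with $K = G_1$. The subtlety is that the lemma wants a \emph{star} decomposition $K, L_1, \ldots, L_n$ with all $L_i$ meeting $K$ and none containing $v$; a plain two-piece decomposition $\{G_1, G_2\}$ is the degenerate star with $n = 1$ and $L_1 = G_2$, and indeed $v \notin G_2$ while $G_2 \cap G_1 \neq \varnothing$. If $G_1$ is \emph{not} atomic, I invoke the inductive structure: decompose $G_1$ into $K_1, \ldots, K_k$ (its own decomposition, which exists since $G_1$ is non-atomic and is smaller than $G$). Since $v \notin G_2, \ldots, G_n$, Lemma \ref{DeeperLevel} applies verbatim: if $v$ sits in a single piece $K_1$ of this finer decomposition we obtain a decomposition $K_1, L_1, \ldots, L_m$ of $G$ with $v \notin L_i$, and if $v$ sits in two pieces we obtain a two-piece decomposition $L_1, L_2$ with $v$ in both. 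In the latter situation we have landed in conclusion~(1). In the former, we have pushed $v$ into a strictly smaller subgraph $K_1$ on which I reapply the lemma by induction: either $K_1$ is atomic, giving the desired star~(2) directly, or I repeat the descent.

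The main obstacle, and the place that needs care, is guaranteeing that the process terminates in the atomic/star case rather than cycling, and that when I reapply the induction hypothesis inside $K_1$ and obtain a star with center $K$, the outer pieces $L_1, \ldots, L_m$ produced by Lemma \ref{DeeperLevel} can be \emph{merged} with the star produced inside $K_1$ into a single valid star decomposition of all of $G$ with the same center $K$. Concretely I must check that none of the merged outer pieces contains $v$ (this follows because $v \in K_1 \subset K$ and $v \notin L_i$) and that each still meets the center $K$ in exactly the shared vertex, so that the decomposition graph remains a tree that is in fact a star. Since each descent strictly decreases the edge count, the induction is well-founded and the termination is automatic.
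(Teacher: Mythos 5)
Your overall skeleton coincides with the paper's: use Lemma \ref{DeeperLevel} as the descent engine, stop either when $v$ falls into two pieces (conclusion 1) or when the unique piece containing $v$ becomes atomic, and justify termination by the strict decrease of the edge count of that piece. But there is a genuine gap at the stopping point. When $K_1$ is atomic you claim conclusion (2) holds ``directly,'' yet Lemma \ref{DeeperLevel} only delivers a decomposition $K_1, L_1, \ldots, L_m$ of $G$ with $v \notin L_i$ whose decomposition graph is some \emph{tree} --- nothing forces $L_i \cap K_1 \ne \varnothing$, which conclusion (2) explicitly requires (the decomposition graph must be a star centered at $K$). Concretely, let $G$ be a path on three edges $e_1, e_2, e_3$ with $v$ an endpoint of $e_1$, decomposed as $\{e_1\}, \{e_2\}, \{e_3\}$: here $K_1 = \{e_1\}$ is atomic and contains $v$, but $\{e_3\} \cap K_1 = \varnothing$, so this decomposition does not witness conclusion (2) as it stands. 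The paper closes exactly this hole with a step your plan never performs: in the decomposition graph, delete the vertex $K$, let $A_1, \ldots, A_n$ be the resulting connected components, and define each new piece $L_i$ as the union of all subgraphs lying in $A_i$. Since each component of a tree minus a vertex is attached to that vertex by exactly one tree edge, every merged $L_i$ is connected, meets $K$ in exactly one vertex, and avoids $v$; this produces the required star in one stroke.

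Your fallback discussion of merging does not repair this, because you propose merging each outer piece with the single star leaf ``it touches'': an outer piece at distance two or more from the center in the decomposition tree intersects only other outer pieces, so pairwise merging leaves it stranded; one must collapse entire branches at once, which is precisely the component operation above. A second, related defect: your recursion ``reapply the lemma inside $K_1$'' returns decompositions of $K_1$, not of $G$, so when the inner call lands in conclusion (1) you still owe a lifting step back to a two-piece decomposition of $G$ with $v$ in both pieces. The paper sidesteps this entirely by always applying Lemma \ref{DeeperLevel} to a decomposition of $G$ itself (with a finer decomposition of the piece containing $v$ as auxiliary input), so that every intermediate object is already a decomposition of $G$. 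Both defects are repairable, but as written your proof of conclusion (2) is incomplete.
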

\begin{proof}
Take an arbitrary decomposition of $G$. We will apply \ref{DeeperLevel} to this decomposition util one of two will hold:

1) decomposition of $G$ has two graphs that contain vertex $v$ then using \ref{DeeperLevel}, construct a decomposition $L_1, L_2$ of graph $G$ such that $v \in L_i$,

2) decomposition of $G$ has one graph that contains vertex $v$ and this graph is atomic (on of two conditions will eventually become true, since inc case  1 of lemma \ref{DeeperLevel} graph of decomposition $K_1$ that contains vertex $v$ has less edges than initial graph $G_1$). We will denote atomic graph as $K$ and decomposition as $K, G_1, \ldots, G_m$. In decomposition graph vertex $K$ splits graph on several connected components $A_1, \ldots, A_n$. Union of graphs of a connected component we will denote as $L_1, \ldots, L_n$ respectively. Since $v \notin L_i$ и $L_i \cap K \ne \varnothing$ this is the required decomposition.
\end{proof}

\begin{proposition}
Using the two ways to construct graphs that were described above (the second is used with seven graphs from \ref{SevenGraphs}) we can construct any graph with selected vertex with a number of edge coverings no more that 67.
\end{proposition}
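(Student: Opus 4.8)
The plan is to argue by strong induction on the number of edges of $G$, keeping the paper's standing assumption that $G$ is connected and bipartite. The single fact I would carry through the whole argument is monotonicity of the number of coverings under passing to connected subgraphs: every piece produced below is again connected and bipartite, and for a connected subgraph $H \subseteq G$ one has $\alpha(H) \le \alpha(G)$ by \cite[6.1]{Core} (the estimate already invoked in the proof of Proposition \ref{SevenGraphs}). Consequently any piece cut off from $G$ automatically satisfies $\alpha \le \alpha(G) \le 67$, which is exactly the condition needed to feed it back into the induction.

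For the base case I take $G$ atomic. Then $\alpha(G) \le 67$ together with Proposition \ref{SevenGraphs} forces $G$ to be one of the seven atomic bipartite graphs, and $\mathcal{G} = (G, v)$ is obtained by a single use of the second construction applied to $G$ alone, with nothing glued to its non-selected vertices. The single-edge graph is atomic and is handled here; a lone vertex is the degenerate primitive, outside the scope of the two edge-producing constructions.

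For the inductive step I take $G$ non-atomic and apply Lemma \ref{TwoOptions} to the selected vertex $v$. In the first alternative I get a decomposition $L_1, L_2$ with $v \in L_1 \cap L_2$; since two members of a decomposition meet in at most one vertex, $L_1 \cap L_2 = \{v\}$, so $\mathcal{G}_1 = (L_1, v)$ and $\mathcal{G}_2 = (L_2, v)$ are exactly the input of the first construction, which returns $\mathcal{G}$. In the second alternative I get a star decomposition $K, L_1, \ldots, L_n$ with $K$ atomic, $v \in K$, and $L_i \cap K = \{v_i\}$ where $v_i \ne v$ and $V(L_i) \cap V(L_j) = \emptyset$ for $i \ne j$ (the branches are leaves of the star, hence pairwise disjoint); this is precisely the data of the second construction, with $K$ the central atomic graph selected at $v$ and each $\mathcal{G}_i = (L_i, v_i)$ glued at the vertex $v_i$ of $K$. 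In both alternatives $\mathcal{G}$ is assembled in one step from its pieces, each of which has strictly fewer edges than $G$ (every $L_j$ has at least one edge and the pieces partition $E(G)$) and each of which has $\alpha \le 67$ by the monotonicity above; the induction hypothesis then applies to each $L_j$, while $K$ is one of the seven atomic graphs by Proposition \ref{SevenGraphs} since $\alpha(K) \le 67$.

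The step I expect to be the real obstacle is not the bookkeeping but making the two alternatives of Lemma \ref{TwoOptions} line up verbatim with the hypotheses of the two gluing constructions: one must check that the intersection pattern of a decomposition (pairwise at most one common vertex; star shape in the second case) delivers exactly the ``common selected vertex, otherwise disjoint'' configuration of construction one and the ``atomic centre, branches attached at distinct non-selected vertices, otherwise disjoint'' configuration of construction two. Should one prefer to avoid quoting \cite{Core}, the bound $\alpha(\text{piece}) \le \alpha(G)$ can instead be read off the Section 5 formulas (writing $\alpha_i,\beta_i,s_i$ for the values on the $i$-th piece): for construction one, $\alpha(\mathcal{G}) = \alpha_1\alpha_2 + \alpha_1\beta_2 + \beta_1\alpha_2 \ge \alpha_i$ since every term is nonnegative and $\alpha_j \ge 1$ for a connected graph with an edge; for the centre of construction two, the single summand $V' = V_0$ already gives $\alpha(\mathcal{G}) \ge \alpha(K)\prod_i s_i \ge \alpha(K)$, the remaining summands being nonnegative. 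The bound for a branch $L_i$ of construction two is the one point where the direct formula is awkward, and there the subgraph monotonicity of \cite{Core} is the economical tool.
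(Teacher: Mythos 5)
Your proof is correct and follows essentially the same route as the paper: induction on the number of edges, with the atomic case handled by Proposition \ref{SevenGraphs} and the non-atomic case split by Lemma \ref{TwoOptions} into the two gluing constructions. The only difference is that you make explicit two points the paper leaves tacit --- that the pieces inherit the bound $\alpha \le 67$ (via subgraph monotonicity, quoted from \cite{Core} as in the proof of Proposition \ref{SevenGraphs}, or read off the gluing formulas) and that the intersection patterns of the decompositions match the hypotheses of the two constructions verbatim --- which is a welcome tightening rather than a different argument.
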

\begin{proof}
We will prove this proposition by induction by number of edges $\#E = n$.

If $n=1$ there exists only one connected graph with one edge, it is atomic and is among the seven graphs we use.

Let's assume that we can construct any connected graph with less that $n$ edges and no more than 67 edge coverings. Let $\mathcal{G} = (G, v), G = (V, E)$ be an arbitrary connected graph with $n$ edges and no more than 67 edge coverings. If it is atomic it is among  the seven graphs we use and it can be constructed with the use of second way. If it is non-atomic then by \ref{TwoOptions} there can be two options:

1) there is a decomposition of $G$ with subgraphs $L_1, L_2$ such that $v \in L_i$. Then we can construct graphs $L_1$ and $L_2$ with selected vertex $v$ (we can do this by assumption) and connect them using way 1,

2) there is a decomposition of $G$ with subgraphs $K, L_1, \ldots, L_n$ such that $K$ is an atomic graph, $v \in K$, $v \notin L_i$ and $L_i \cap K \ne \varnothing$. Then we can construct graphs $L_1, \ldots, L_n$ with selected vertices and combine them with atomic graph $K$ using way 2.
\end{proof}

The algorithm for constructing graphs is the following: we have a pool of pairs $(\alpha, \beta)$. Initially there is only one pair $(0, 1)$ which is the pair for empty set. At each step we try to construct all pairs we can using ways one and two and constructed graphs in the pool. We add all pairs that weren't encountered before to the pool. If after any step we didn't add a pair with $\alpha < x$ then we won't add such a pair at all and no graph with $x$ edge coverings can be constructed from given atomic graphs. We can keep running the algorithm to find what  \emph{can} be constructed, though.

We have implemented this algorithm is C++ and C\# programming languages and this programs gave us identical results:

\begin{proposition}
There is no bipartite graph such that it has 19, 37, 41, 59 or 67 edge coverings. A bipartite graph can have any number of edge coverings from 1 to 1000, except 19, 37, 41, 59, 67, 82, 97, 149, 197, 223, 257, 291 and 379.
\end{proposition}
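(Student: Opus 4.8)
The plan is to settle both assertions by a single finite computation performed on the pairs $(\alpha(\mathcal{G}),\beta(\mathcal{G}))$ attached to connected graphs with a selected vertex, rather than on the graphs themselves. The point is that the two construction operations are \emph{compositional} in these pairs: gluing several graphs at their common selected vertex gives $\beta = \prod \beta(\mathcal{G}_i)$ and $\alpha = \prod s(\mathcal{G}_i) - \prod \beta(\mathcal{G}_i)$, while gluing graphs to the free vertices of one of the seven atomic graphs of \ref{SevenGraphs} expresses $\alpha$ and $\beta$ through the $\alpha$- and $s$-values of the pieces together with the fixed subset data $\alpha(G_0(V'))$ of that atomic graph. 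Hence both operations descend to well-defined operations on $\Z_{\ge 0}^2$, and I would phrase everything as computing the closure of the base pair $(0,1)$ (the one-vertex graph) under them.

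For the impossibility half, note that $19,37,41,59,67$ are all at most $67$ and are prime. If a bipartite graph realised one of them, the connected component carrying that prime factor would be a connected bipartite graph with $\alpha \le 67$ (the remaining components all have $\alpha=1$ and may be dropped); choose any vertex in it as selected. By the construction proposition deduced from \ref{TwoOptions}, every such graph is assembled from the seven atomic graphs by finitely many applications of the two operations, so its pair $(\alpha,\beta)$ lies in the closure above with $\alpha \le 67$. I would then have the program enumerate that closure and verify that no member has first coordinate in $\{19,37,41,59,67\}$, which contradicts the existence of the assumed graph.

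To make this enumeration a proof rather than a heuristic I must guarantee it is exhaustive, i.e.\ that the set of reachable pairs with $\alpha \le 67$ is finite. The $\alpha$-coordinate is controlled by monotonicity: the two combination formulas, supported by Lemmas \ref{ZeroNewPoints}, \ref{OneNewPoint} and \ref{ManyNewPoints}, show that $\alpha$ never decreases under combination, so the search may be truncated at $67$. For the $\beta$-coordinate I would prove the clean bound $\beta(\mathcal{G}) \le \alpha(\mathcal{G})$ for every connected graph with at least one edge whose selected vertex is not isolated: fixing an edge $e=(v,u)$ incident to the selected vertex $v$, the map $C \mapsto C \cup \{e\}$ sends each edge covering of $G \setminus v$ injectively to an edge covering of $G$, whence $\beta = \alpha(G\setminus v) \le \alpha(G)$. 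Thus every building block other than the base pair satisfies $0 \le \beta \le \alpha \le 67$, the closure is finite, and its exhaustion certifies the impossibility of the five values.

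For the realizability half I would first observe that both operations preserve bipartiteness, since the vertex at which pieces are attached is always a cut vertex, so every cycle of the assembled graph lies inside a single piece and is therefore even; consequently every graph produced is genuinely bipartite and each first coordinate it outputs is realised. Re-running the closure with the truncation raised to $1000$ then exhibits, for every integer in $\{1,\dots,1000\}$ outside the thirteen listed, an explicit bipartite graph with that many edge coverings. The main obstacle, and the reason only five of the thirteen are declared impossible, is the scope of \ref{SevenGraphs}: completeness of the construction is available only below the threshold $67$. For $82,97,149,197,223,257,291,379$, all exceeding $67$, the enumeration remains \emph{sound} (everything it builds is a real bipartite graph) but is no longer \emph{complete}, because a realisation might use an atomic graph with more than $67$ coverings that is absent from our list; so the failure to construct these eight values leaves them genuinely open, whereas $19,37,41,59,67 \le 67$ are decided.
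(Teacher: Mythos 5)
Your proposal is correct and is essentially the paper's own argument: the paper, too, proves this proposition by running exactly this pair-based closure computation on $(\alpha,\beta)$ starting from $(0,1)$ under the two gluing operations, with completeness below $67$ coming from the construction proposition derived from \ref{TwoOptions} and the seven atomic graphs of \ref{SevenGraphs}, and with the realizability half (and the openness of $82,\ldots,379$) resting on the same sound-but-incomplete enumeration. If anything, you supply details the paper leaves implicit --- termination via $\beta(\mathcal{G}) \le \alpha(\mathcal{G})$, monotonicity of $\alpha$ justifying the truncation (which follows directly from the two combination formulas rather than from Lemmas \ref{ZeroNewPoints}--\ref{ManyNewPoints}), preservation of bipartiteness, and the primality reduction to connected graphs.
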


\section{Unsolved problems and possible applications of an algorithm}

This algorithm turned out to be pretty inefficient and needs some improvements. We believe this can be done and it can check for us numbers 82 and 97 (we need only one more atomic graph). However, for the algorithm to work with larger numbers it needs exponentially growing number of atomic graphs which makes it totally useless for 149 and above.

As one can see, this algorithm can be used not only with atomic graphs, but with any graph, with the same problems in exponentially growing number of atomic graphs. This algorithm can also be used to find possible edge coverings in a trees. This requires only one atomic graph and is pretty fast. For example, we were able to obtain this result in less than a minute of computer time: from 1 to 256 we can not construct trees with 19, 37, 41, 57, 59, 67, 79, 82, 97, 111, 131, 149, 177, 179, 197, 201, 205, 223, 237 and 251, others can be constructed.

All the atomic graphs were planar so far, and the structure generated by them also planar, but this changes if we take larger atomic graphs. We saw no new pairs when we added non-planar graphs to the algorithm. The question is the following: is there a non-planar graph that there is no planar graph with the same number of edge coverings?

\end{document}